\newcommand{\keywords}[1]{\par\addvspace\baselineskip
\noindent\keywordname\enspace\ignorespaces#1}
\renewcommand*{\@fnsymbol}[1]{\ensuremath{\ifcase#1\or  	\star  \or \bullet \or   *\or \else\@ctrerr\fi}}
\begin{document}

\mainmatter  

\title{On arithmetic index in the generalized Thue-Morse word\thanks{This work was performed within the framework of the LABEX MILYON (ANR-10-LABX-0070) of Universite de Lyon, within the program "Investissements d'Avenir" (ANR-11-IDEX-0007) operated by the French National Research Agency (ANR).}\thanks{ The publication is available at {\tt link.springer.com/chapter/10.1007/978-3-319-66396-8\_12}}}

\titlerunning{On arithmetic index in the generalized Thue-Morse word}

%
%
\author{Olga G. Parshina}
\authorrunning{O. Parshina}

\institute{Sobolev Institute of Mathematic SB RAS,\\
4 Acad. Koptyug avenue, 630090 Novosibirsk, Russia,\\
Universit\'{e} de Lyon, Universit\'{e} Claude Bernard Lyon 1, Institut Camille Jordan,\\
43 boulevard du 11 novembre 1918, F-69622 Villeurbanne Cedex, France\\
\mailsa\\}

%
%

\toctitle{On Arithmetic Index in the Generalized Thue-Morse Word}
\tocauthor{Olga~G.~Parshina}
\maketitle

\begin{abstract}
Let $q$ be a positive integer. Consider an infinite word $\omega=w_0w_1w_2\cdots$ over an alphabet of cardinality $q$. A finite word $u$ is called an arithmetic factor of $\omega$ if $u=w_cw_{c+d}w_{c+2d}\cdots w_{c+(|u|-1)d}$ for  some choice of positive integers $c$ and $d$. We call $c$ the initial number and $d$ the difference of $u$. For  each such $u$ we define its arithmetic index by $\lceil\log_q d\rceil$ where $d$ is the least positive integer such that $u$ occurs in $\omega$ as an arithmetic factor with difference $d$. In this paper we study the rate of growth of the arithmetic index of arithmetic factors of a generalization of the Thue-Morse word defined over an alphabet of prime cardinality. More precisely, we obtain upper and lower bounds for the maximum value of the arithmetic index in $\omega$ among all its arithmetic factors of length $n$.
\keywords{Arithmetic Index $\cdot$ Arithmetic Progression $\cdot$ Thue-Morse Word}
\end{abstract}

\section{Introduction}
One of the main characteristics of a given word is the factor complexity which counts the number of its distinct factors of each fixed length. We are interested in studying of so-called arithmetic factors. In other words, for a given infinite word $\omega=w_0w_1w_2\cdots$ over a finite alphabet $\Sigma$ we are studying the structure of its {\it arithmetic closure} -- the set $A_\omega=\{w_c w_{c+d}w_{c+2d}\cdots w_{c+(n-1)d}|c\geq 0, d, n\geq 1\}$. Elements of $A_\omega$ are arithmetic subsequences or arithmetic factors with initial number $c$ and difference $d$ of the word $\omega$. Of special interest are arithmetic factors having period 1, which are called {\it arithmetic progressions}. According to the classical Van der Waerden theorem \cite{War}, the arithmetic closure of each infinite word $\omega$ over an alphabet of cardinality $q$ for every positive integer $n$ contains an arithmetic progression of length $n$.
A point of interest is to determine an upper bound on the minimal difference, with which the arithmetic progression of length $n$ appears in the arithmetic closure of a given word. The first result of the paper (see Theorem~\ref{theor}) concerns the distribution of words with period $1$ in case $\omega$ is an infinite word over an alphabet of prime cardinality generalising the classical Thue-Morse word originally introduced by Thue in \cite{Thue} (see also \cite{allsh}). For a prime $q$ and for every positive integer $n$ this theorem provides the maximal length of an arithmetic progression with difference $d<q^n$ in the generalized Thue-Morse word over the alphabet of cardinality $q$ and extends the earlier result on the generalized Thue-Morse word over the alphabet of cardinality 3 obtained by the author in \cite{Par}.

The next question appearing in this context concerns the distribution of arithmetic subsequences with period 2. In the case of binary alphabet such subsequences have period 01, and we call them alternating subsequences.
There is a conjecture, that in the Thue-Morse word alternating sequences are "the hardest to find", i.e.
if $d$ is the difference of the first occurrence of the alternating sequence of length $n$ in the Thue-Morse word as an arithmetic factor, and $h$ is the difference of the first arithmetic occurrence of some binary word of length $n$ in the Thue-Morse word, then $d$ is great or equal to $h$.

To carry out computer experiments and to check the conjecture we introduce the notion of arithmetic index. More precisely, given a positive integer $q$ and an infinite $q$-automatic word $\omega$, for every finite word $u$ from its arithmetic closure we seek to determine the least positive integer $d$ such that $u$ occurs in $\omega$ with difference $d$.
We call the $q$-ary expansion of the difference $d$ the {\it arithmetic index} of $u$ in $\omega$.

Computer experiments show that the set of words of the maximal arithmetic index in the Thue-Morse word contains alternating sequences, but they are not the sole members of this set. Describing this set even for a particular word did not appear to be an easy task.
In this paper we try to determine upper and lower bounds on the rate of growth of the arithmetic index in case when $\omega$ is the generalized Thue-Morse word over the alphabet of prime cardinality.
An upper bound is determined using the result on lengths of arithmetic progressions formulated in Theorem~\ref{theor}; a lower bound is obtained using the factor and arithmetical complexities of the word.

\section{Preliminaries}
Let $q$ be a positive integer and $\Sigma$ a finite alphabet of cardinality $q$. An infinite word over $\Sigma$ is an infinite sequence $\omega=w_0w_1w_2\cdots$ with $w_i\in\Sigma$ for every $i\in\mathbb{N}$. A finite word $u$ over $\Sigma$ is said to be a factor of $\omega$ if $u=w_jw_{j+1}\cdots w_{|u|+j-1}$ for some $j\in\mathbb{N}$.

For each positive integer $d$, let $A_\omega(d)=\{w_cw_{c+d}w_{c+2d}\cdots w_{c+(k-1)d}| c,k\in\mathbb{N}\}$ be the set of all arithmetic subsequences in the word $\omega$ of difference $d$. Elements of $A_\omega(d)$ are called arithmetic subwords or arithmetic factors of $\omega$.

The arithmetic closure of $\omega$ is the set $A_\omega=\bigcup\limits_{d=1}^\infty A_\omega(d)$ consisting of all its arithmetic factors, and the function $a_\omega(n)=|A_\omega\cap\Sigma^n|$ counting the number of distinct arithmetic factors of each fixed length $n$ occurring in $\omega$ is called the {\it arithmetical complexity} of $\omega$. The notion of arithmetical complexity was introduced by Avgustinovich,  Fon-der-Flaass and Frid in \cite{avg}. Since $A_\omega(1)$ coincides with the set of factors of $\omega$, it follows trivially that $a_\omega(n)\geq p_\omega(n)$. But aside from this basic inequality, there is no general relationship between the rates of growth of these two complexity functions.
For instance, there exist infinite words of linear factor complexity and whose arithmetical complexity grows linearly or exponentially, as seen in \cite{avg}; arithmetical complexity of Sturmian words, which have factor complexity equals $n+1$, grows as $O(n^3)$ (see \cite{fc}).  A characterization of uniformly recurrent words having linear arithmetical complexity one can see in \cite{af1}. The question about lowest possible complexity among uniformly recurrent words was studied in \cite{acf}.  A family of words with various sub-polynomial growths of arithmetical complexity was constructed in \cite{af2}.

For a given infinite word $\omega$ and a finite word $u\in A_\omega$ we are interested in the least positive integer $d$ such that $u$ belongs to $A_\omega(d)$. We denote the length of the $q$-ary representation of such a minimal difference as $i_\omega(u)$ and call this quantity the {\it arithmetic index} of $u$ in $\omega$. For each positive integer $n$, we consider the function $I_\omega(n) = \max\limits_{u\in A_\omega\cap \Sigma^n}i_\omega(u)$. Let us note, that this function is defined over the set of arithmetic factors of $\omega$.

In this we study the growth rate of the arithmetic index for a generalization of the Thue-Morse word defined over an alphabet $\Sigma_q = \{0,1,..., q-1\}$, where $q$ is a prime number. Let $S_q : \mathbb{N} \to \Sigma_q^+$ be the function which assigns to each natural number $x$  its base-$q$ expansion. The length of this word is denoted by $|S_q(x)|$. Also let $s_q(x)$ be the sum modulo $q$ of the digits in $q$-ary expansion of $x$. In other words, if $x=\sum\limits_{i=0}^{n-1} x_i\, q^i$, then  $S_q(x) = x_{n-1}\cdots x_1x_0$ and $s_q(x)= \sum\limits_{i=0}^{n-1} x_i$ mod $q$.
We define the  generalized Thue-Morse word $\omega_{q} = w_0w_1w_2w_3\cdots$ over the alphabet $\Sigma_q$ by  $w_i = s_q(i) \in \Sigma_q$. We note that this generalization differs from the one given in \cite{TrSh}. In case $q=2$, we recover the classical Thue-Morse word which is known to  be arithmetic universal, i.e. $a_{\omega_2}(n)=2^n$, as it is shown in \cite{avg}, moreover, using results of the paper it is easy to deduce that $a_{\omega_q}(n)=q^n$.
In case $q=3$,  the generalized Thue-Morse word over ternary alphabet is given by:
\begin{center} $\omega_{3}=012120201120201012201012120\cdots$ \end{center}
A lower and an upper bounds on the rate of growth of the function $I_{\omega_q}(n) = \max\limits_{u\in A_{\omega_q}\cap \Sigma_q^n}i_{\omega_q}(u)$ are obtained in the paper.
The upper bound grows as $O(n\log{n})$, the lower one grows linearly.

\section{Upper Bound on Arithmetic Index in $\omega_q$}
An upper bound is based on the distribution of arithmetic progressions -- arithmetic subsequences consisting of the same symbols -- in the generalized Thue-Morse word formulated below.

\subsection{Theorem on Arithmetic Progressions in $\omega_q$}
Let $L_\omega(c,d)$ be the function which outputs the length of an arithmetic progression with initial number $c$ and difference $d$ for positive integers $c$ and $d$ in an infinite word $\omega$. The function $L_\omega(d)=\max\limits_c L_\omega(c,d)$ gives the length of the maximal arithmetic progression with the difference $d$ in $\omega$.
Let us note, that for us the symbol of the alphabet on which the function $L_\omega(c,d)$ reaches its maxima is of no importance, since the set of arithmetic factors of the generalized Thue-Morse word is closed under adding a constant to each symbol.
\begin{theorem}
\label{theor}
  Let $q$ be a prime number and $\omega_q$ be the generalized Thue-Morse word over the alphabet $\Sigma_q$. For all integers $n\geq 1$ the following holds:
  \begin{equation*}
 \max_{d<q^n}L_{\omega_q}(d)   =
\begin{cases}
q^n+2q, &\text{$n\equiv 0$ mod $q$},\\
 q^n, &\text{otherwise.}
\end{cases}
\end{equation*}
Moreover, the maximum is reached with the difference $d=q^n-1$ in both cases.

\end{theorem}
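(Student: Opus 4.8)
The plan is to analyze arithmetic progressions in $\omega_q$ via the digit-sum characterization $w_i = s_q(i)$. A progression with initial term $c$ and difference $d$ has constant color iff $s_q(c+kd) = s_q(c)$ for all $k$ in the relevant range. The key observation is the classical fact that $s_q(x+y) \equiv s_q(x) + s_q(y) - (q-1)\cdot(\text{number of carries when adding } x \text{ and } y \text{ in base } q) \pmod{q}$; since $q$ is prime and we work modulo $q$, the term $(q-1)$ is $\equiv -1$, so $s_q(c+kd) - s_q(c) - s_q(kd)$ is governed entirely by carry counts. Thus $L_{\omega_q}(c,d)$ is essentially the length of the longest run $k = 0,1,2,\dots$ for which adding $kd$ to $c$ produces a carry pattern whose weighted count stays in a fixed residue class, and simultaneously $s_q(kd)$ returns to $0$.

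First I would treat the extremal difference $d = q^n - 1$ explicitly, since the "moreover" clause asserts it achieves the maximum. Writing $d = q^n - 1$, multiplication by $d$ has the clean form $kd = kq^n - k$, so that in base $q$ the number $c + kd$ decomposes, for suitable $c$, into a block coming from $kq^n$ and a block coming from subtracting $k$; I would choose $c$ of the form $q^n \cdot a$ (or a nearby value) so that the two blocks interact in a controlled way and compute $s_q(c+kd)$ for $k = 0, 1, \dots$ directly. This should yield a progression of length exactly $q^n$ in the generic case, and when $n \equiv 0 \bmod q$ an extra $2q$ terms survive because the digit-sum of $q^n$ itself vanishes modulo $q$ (as $n$ contributes $0$ to its own residue in a way that lets the carry bookkeeping realign) — pinning down this precise "$+2q$" is one delicate point and will require careful tracking of when the high-order carries wrap around. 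I would also verify the lower bound by exhibiting the explicit $c$.

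For the upper bound $\max_{d < q^n} L_{\omega_q}(d) \le q^n + 2q$ (resp. $q^n$), I would argue that any difference $d < q^n$ has base-$q$ length at most $n$, and then show that among $k = 0, 1, \dots, N-1$ the quantities $s_q(c+kd)$ cannot all agree once $N$ exceeds the stated bound. The natural route is a pigeonhole / carry-counting argument: as $k$ increments, the low-order digits of $c + kd$ cycle with period dividing $q^{n}$, and within one such period the digit sum $s_q(c+kd)$ must take a value outside the target residue class unless $d$ has a very special shape — essentially $d = q^n - 1$ up to the symmetries of the problem. I expect the main obstacle to be precisely this uniqueness-of-extremizer step: ruling out \emph{all} other $d < q^n$ requires understanding the carry dynamics of $k \mapsto kd$ uniformly in $d$, and the case split modulo $q$ (where the behavior of $s_q(q^n)$ flips) is exactly where the argument is most fragile. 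A convenient reduction is to note that the set of arithmetic factors is invariant under adding a constant to every letter, so we lose nothing by normalizing the target color to $0$ and the analysis reduces to counting carries modulo $q$; combined with induction on $n$ (relating length-$n$ behavior to length-$(n-1)$ behavior by stripping the leading digit of $d$), this should close the estimate.
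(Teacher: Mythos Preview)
Your framework is sound and largely matches the paper's: the digit-sum/carry identity is exactly the right tool, the reduction to color $0$ by the shift-invariance of $A_{\omega_q}$ is what the paper uses implicitly, and your cycling observation (the low $n$ digits of $c+kd$ run through all of $\{0,\dots,q^n-1\}$ when $\gcd(d,q)=1$) is precisely the starting point of the paper's upper bound. Where your proposal has a genuine gap is what to do \emph{after} that cycling observation.

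You suggest closing the upper bound by induction on $n$, stripping the leading base-$q$ digit of $d$. This does not work in any straightforward way: the carry pattern of $k\mapsto c+kd$ does not decompose along the top digit of $d$, because a carry out of position $n-1$ depends on the entire lower block, and removing $d_{n-1}q^{n-1}$ changes those carries nonlocally. The paper does \emph{not} induct. Instead, once it knows every length-$(q^n+1)$ arithmetic run must hit each residue $x\in\{0,\dots,q^n-1\}$ as a suffix, it \emph{constructs}, for every $d\neq q^n-1$ with $q\nmid d$, an explicit blocker $x$ with $x+d<q^n$ and $s_q(x+d)\neq s_q(x)$ (so the progression breaks at that suffix regardless of the high part of $c$). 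This is a short but genuine case analysis on the digit pattern of $d$: if $s_q(d)\neq 0$ take $x=0$; otherwise locate a position $j$ with $d_j<q-1$ and $d_{j-1}=q-1$ and take $x=q^{j-1}$; the residual shapes $d=\dot q\cdots\dot q\,d_0$ need a two-step argument. Your sketch does not supply any such mechanism, and without it the pigeonhole alone gives nothing.

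There is a second gap for $d=q^n-1$ itself. You propose $c=q^n\cdot a$ and a $kd=kq^n-k$ split; the paper's parametrization is finer: write $c=z\,q^{2n}+y\,q^n+x$ and first treat the ``aligned'' case $x_j+y_j=q-1$ for all $j$ (equivalently $x+y=q^n-1$), where adding $d$ acts as simultaneous $y\mapsto y+1$, $x\mapsto x-1$ and the digit sum is constant for exactly $x$ steps, then survives a further $q$ steps precisely when $q\mid n$. The extremal $c$ (giving $q^n+2q$) and the exact source of the ``$+2q$'' come out of this bookkeeping; you flagged this as delicate but did not do it. Equally important, the \emph{upper} bound for $d=q^n-1$ (no $c$ beats $q^n+2q$) requires a separate case analysis on the first index $j$ with $x_j+y_j\neq q-1$, which your proposal does not mention at all.
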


\begin{proof}[of Theorem \ref{theor}]
Since the theorem is a generalization of the main result of \cite{Par}, the technique of proving is similar to one presented there.

As the first step it should be proved that for a fixed $n$ the inequality $d\neq q^n-1$ implies $L_{\omega_q}(d)\leq q^n$. During the proof we have to manipulate with values $q-1$ and $q-2$, thus let us use notations $\dot{q}:=q-1, \ddot{q}:=q-2$.

\subsubsection{Case of $d\neq q^n-1$. }

Let us note that subsequences of the $\omega_q$ which are composed of letters with indices having the same remainder of the division by $q$ are equivalent to the word itself, so we do not need to consider differences which are divisible by $q$.

\begin{lemma}
\label{lem0}
Let $q$ be a prime number and $\omega_q$ be the generalized Thue-Morse word over $\Sigma_q$. For any positive integer $n$ and $d\leq q^n-1$ the length of the longest arithmetic progression with difference $d$ in $\omega_q$ is not greater than $q^n$.
\end{lemma}
\begin{proof}
Every number can be represented in the following way: $c=y\,q^n + x$, where $x,y$ are arbitrary positive integers, $x<q^n$. Let us call $x$ the suffix of $c$.

Consider a set $X=\{0, 1, 2, ..., q^n-1\}$, its cardinality is $|X|=q^n$. As far as each difference $d$ and suffix $x$ belong to $X$ and $d$ is prime to $|X|$, the set $X$ is an additive cyclic group, and $d$ is a generator of $X$, thus for every $x\in X$ the set $\{x+i\, d\}_{i=0}^{q^n-1}$ is precisely $X$.
To proof the statement for this case, it is enough to provide for each $d\neq q^n-1$ an element $x\in X$ with the following properties:\\
(a)  $x+d<q^n$; \\
(b)  $s_q(x+d)\neq s_q(x)$.\\
Indeed, consider the initial number of the form $c=y\, q^n + x$ with $x$ satisfying (a) and (b) and $y$ being an arbitrary positive integer. Because of (a), $c+d=y\, q^n +(x+d)$.
Hence,  $s_q(c)=s_q(y)+s_q(x)$ mod $q$,  $s_q(c+d)=s_q(y)+s_q(x+d)$ mod $q$, and because of (b), $s_q(c+d)\neq s_q(c)$. That means, if we consider an arithmetic subsequence with difference $d$ starting with any symbol of generalized Thue-Morse word and having length $q^n+1$, then it will contain a symbol with the index of the form $c$ mentioned above and thus at least two different symbols of the alphabet $\Sigma_q$. This implies that the arithmetic progression in this case has length less or equal to $q^n$.

If $s_q(d)\neq 0$, then $x=0$ fits. In other case we use the inequation $d\neq q^n-1$ which means that $S_q(d)=d_{n-1}\cdots d_1d_0$ has at least one letter $d_j, j\in\{0,1,...,n-1\}$: $d_j\neq \dot{q}$. There are two possibilities:
\begin{enumerate}
\item  There exists at least one index $j$ such that $d_j < \dot{q}$ and $d_{j-1}= \dot{q}$.\\
 In this case $x=q^{j-1}$ fits. Indeed, the $q$-ary representation of $c$ is $S_q(y)S_q(x)$, where all symbols $x_i$ are zeros except $x_{j-1}=1$, thus $s_q(c)=s_q(y)+1$.
 The representation of the difference $d$ is $d_{n-1}\cdots d_j \dot{q} d_{j-2}\cdots d_0$, and the sum of its digits equals zero modulo $q$.
 More precisely, $\sum\limits_{i=0,i\neq j-1}^{n-1}{d_i} +\dot{q} \equiv 0$ mod $q$, or $\sum\limits_{i=0,i\neq j-1}^{n-1} d_i -1\equiv 0$ mod $q$. Once we add $d$ to $c$, we obtain the number $c+d$ having representation $S_q(y)d_{n-1}\cdots (d_j+1) 0 d_{j-2}\cdots d_0$, where $d_j+1\leq \dot{q}$. Then $s_q(c)=s_q(y)+s_q(d)+1- \dot{q}=s_q(y)+2$, which differs from $s_q(c)=s_q(y)+1$.\\
\item For every $j$ the fact $d_j\neq \dot{q}$ implies that all symbols having indices less than $j$ are not equal to $\dot{q}$. \\
If $j>0$, then $d_1,d_0\neq \dot{q}$, and $d_0\neq 0$ since $d$ is not divisible by $q$. In this case a suitable $x$ is $q-d_0$, because $s_q(x+d)=s_q(1-d_0)\neq s_q(q-d_0)=s_q(x)$.\\
But there is no $x$ satisfying (a) and (b) in the case $j=0$, i.e. then $S_q(d)=\underbrace{\dot{q}\cdots\dot{q}}_{\text{$n-1$}}d_0$.
However, we can take $x$ with $q$-ary expansion of the form $S_q(x)=x_{n-1}\cdots x_1 \dot{q}$, where $x_i\in\Sigma_q$ are arbitrary, and claim that for arbitrary value of $y$ we obtain the number with the sum of digits different from $s_q(c)=s_q(y)+s_q(x)$ after at most two additions of the difference. Consider these two steps. After adding to $c$ with $S_q(c)=S_q(y)\ x_{n-1}\cdots x_1\dot{q}$ and $s_q(c)=s_q(y)+\sum_{i=1}^{n-1}x_i+\dot{q}$ the difference of the form $\dot{q}\cdots \dot{q} d_0$ we obtain the number $c+d$ with $S_q(c+d)=S_q(y+1)x_{n-1}\cdots x_1\dot{d_0}$.
Its sum of digits is $s_q(y+1)+\sum_{i=1}^{n-1}x_i+\dot{d_0}$, and if it differs from $s_q(c)$, then this $x$ fits.
If the values $s_q(c)$ and $s_q(c+d)$ are equal, then the following holds: $s_q(y+1)+d_0\equiv s_q(y)$ mod $q$.
This implies that $q$-ary representation of $y$ ends with $0\underbrace{\dot{q}\cdots\dot{q}}_{\dot{q}-d_0}$, and thus $q$-ary representation of $y+1$ ends with zero.
After the next addition of the difference there are two cases.
If $2d_0\geq q$, we obtain $S_q(y+2)  x_{n-1}\cdots x_1 (\dot{2d_0})$ with $s(y+2)=s(y+1)+1$ and $s_q(c+2d)=s_q(y+1)+\sum_{i=1}^{n-1}x_i+2d_0$ mod $q$, which implies $d_0=\dot{q}$, but this is not the case.
If $2d_0<q$, we obtain the number of the form $S_q(c+2d)=S_q(y+2)  x_{n-1}\cdots x_2 \dot{x_1} (\dot{2d_0})$ with $s_q(c+2d)=s_q(y+1)+\sum_{i=1}^{n-1}x_i+2d_0-1$, which implies $d_0\equiv 0$ mod $q$ and contradicts the fact $d_0\neq 0$.
\end{enumerate}
Thus there are $q(n-1)$ different values for $x$, such that for every positive integer $c$ with the suffix $x$ either $s_q(c+d)\neq s_q(c)$, or $s_q(c+2d)\neq s_q(c+d)$. That means that every arithmetic progression with the difference $d$ with $S_q(d)=\underbrace{\dot{q}\cdots\dot{q}}_{\text{n-1}}d_0, d_0\neq0,d_0\neq\dot{q}$ is not be longer than $q^n$.

Since all possible values of difference $d\neq q^n-1$ are considered, the lemma is proved.
\end{proof}
\subsubsection{Case of $d=q^n-1$.}
We start with the following lemma.
\begin{lemma}
\label{lem1}
Let $q$ be a prime number and $\omega_q$ be the generalized Thue-Morse word over $\Sigma_q$.
Let $d=q^n-1$, $c =z\, q^{2n}+ y\, q^n+x$, where  $x+y=q^n-1$, $z$ is a non-negative integer,  then
\begin{equation*}
\max_z L_{\omega_q}(c, d) =
\begin{cases}
x+q+1, &\text{$n \equiv 0$ mod $q$},\\
   x+1, &\text{otherwise.}
\end{cases}
\end{equation*}
\end{lemma}
\begin{proof}

For descriptive reasons let us introduce a scheme where one can see base-$q$ expansions of $c+id$ and values of $s_q(c+id)$ for each value of $i$, and let us give some comments on that.
\begin{center}
\begin{tabular}{|c|r|c|}
\hline
$\mathbf{i}$&\ \ $\mathbf{S_q(c+id)}$\ \ \ \ \ \ \ \ \ \ \ \ \ \ \ \ \ \ \ \ & $\mathbf{s_q(c+id)}$\\
\hline
 0&\ \ \ $S_q(z)$ $y_{n-1}\ \cdots \ \ y_1y_0$ \ $x_{n-1}\cdots x_1x_0$\ \ \ &\ \  $n\dot{q}+s_q(z)$\ \ \\
 \vdots &\ \ \ \ \ \ \ \vdots \ \ \ \ \ \ \ \ \ \ \ \ \vdots\ \ \ \ \ \ \ \ \ \ \ \ \ \ \ \ \ \ \vdots\ \ \ \ \ \  &\vdots\\
 \vdots &\ \ \ \ \ \ \ \vdots \ \ \ \ \ \ \ \ $+x\, d$ \ \ \ \ \ \ \ \ \ \ \ \ \ \ \vdots\ \ \ \ \ \  & \vdots \\
 \vdots &\ \ \ \ \ \ \ \vdots \ \ \ \ \ \ \ \ \ \ \ \ \vdots\ \ \ \ \ \ \ \ \ \ \ \ \ \ \ \ \ \ \vdots\ \ \ \ \ \  &\vdots\\
$x$&\ \ \ $S_q(z)$ \ $\dot{q}\ \ \ \cdots \ \ \ \dot{q}\ \dot{q}$ \ \ \ \ $0\ \ \ \cdots \ 0\ 0$\ \ \ \  &\ \  $n\dot{q}+s_q(z)$\ \ \\
 $x+1$&\ \ \ $S_q(z)$ \ $\dot{q}\ \ \ \cdots \ \ \ \dot{q}\ \dot{q}$ \ \ \ \ $\dot{q}\ \ \ \cdots \ \dot{q}\ \dot{q}$\ \ \ \   &\ \  $2n\dot{q}+s_q(z)$\ \ \\
 $x+2$&\ $S_q(z+1)$ \ $0\ \ \ \cdots \ \ \ 0\ 0$ \ \ \ \ $\dot{q}\ \ \ \cdots \ \dot{q}\ \ddot{q}$\ \ \ \  &\ \  $n\dot{q}-1+s_q(z+1)$\ \ \\
 \vdots &\ \ \ \ \ \ \ \vdots \ \ \ \ \ \ \ \ \ \ \ \ \vdots\ \ \ \ \ \ \ \ \ \ \ \ \ \ \ \ \ \ \vdots\ \ \ \ \ \  &\vdots\\
 \vdots &\ \ \ \ \ \ \ \vdots \ \ \ \ $+(q-2)\, d$ \ \ \ \ \ \ \ \ \ \ \ \vdots\ \ \ \ \ \  &\vdots \\
 \vdots &\ \ \ \ \ \ \ \vdots \ \ \ \ \ \ \ \ \ \ \ \ \vdots\ \ \ \ \ \ \ \ \ \ \ \ \ \ \ \ \ \ \vdots\ \ \ \ \ \  &\vdots\\
 $x+q$&\ $S_q(z+1)$ \ $0\ \ \ \cdots \ \ \ 0\ \ddot{q}$ \ \ \ $\dot{q}\ \ \ \ \cdots \ \dot{q}\ 0$\ \ \ \  &\ \  $n\dot{q}-1 +s_q(z+1)$\ \ \\
\ $x+q+1$&\ $S_q(z+1)$ \ $0\ \ \ \cdots \ \ \ 0\ \dot{q}$ \ \ \ $\dot{q}\ \ \ \ \cdots \ \ddot{q}\ \dot{q}$\ \ \ \   &\ \  $n\dot{q}+\ddot{q} +s_q(z+1)$\ \ \\
\hline
\end{tabular}
\end{center}
Values in the third column are sums modulo $q$.

Since $d=q^n-1$, we can regard the action $c+d$ as two simultaneous actions: $x-1$ and $y+1$.
Thus, while the suffix of $c+i\,d$ is greater then zero, the sum of digits in $S_q(c+i\,d)$ equals $\dot{q}\,n$. This value holds during the first $x$ additions of $d$ (when $i=0,1,..,x$), and on the step number $x$ the length of the arithmetic progression is $x+1$.

On the next step ($i=x+1$) the sum of digits in result's $q$-ary representation becomes  $\,\, 2\,\dot{q}\,n+s_q(z)$. To preserve the required property of progression members we need $\,\,\dot{q}\,n\equiv2\,\dot{q}\,n$ mod $q$, i.e., $n\equiv 0$  mod $q$.

After the next addition of the difference, $z$ increases to $z+1$, $y$ becomes $0$, $x=q^n-2$ and the sum modulo $q$ of digits in this number $q$-ary expansion becomes $s_q(z+1)+s_q(x)=s_q(z+1)+n\,\dot{q}-1$. 
We may choose a suitable $z$ to hold the homogeneity of the progression, e.g. if $s_q(z)=1$ we need $s_q(z+1)=2$, and $z$ may be equal to $1$. The value of the sum modulo $q$ holds during the following $q-2$ editions of $d$, and we get into the situation of $y=q-2$, $x=q^n-\dot{q}$.

After the next addition $y$ becomes $\dot{q}$, and $x=q^n-q-1$.
Now $s_q(y)+s_q(x)=n\,\dot{q}+\ddot{q}$ mod $q$ and is not equal to its previous value $n\,\dot{q}-1$.

Hence, in the case of $q|n$ the length of an arithmetic progression is $x+q+1$ and it is $x+1$ otherwise. The lemma is proved.
\end{proof}

\begin{lemma}
\label{lem2}
Let $q$ be a prime number, and $\omega_q$ be the generalized Thue-Morse word over $\Sigma_q$.
Let $n\equiv 0$ mod $q$, $d=q^n-1$, $c = z\, q^{2n}+ y\, q^n+x$, $y=q^n-\dot{q}$, $x=\dot{q}$, and $z$ be an arbitrary non-negative integer, then $\max\limits_z L_{\omega_q}(c, d)=q^n+2\,q$.
\end{lemma}

\begin{proof}
The sum modulo $q$ of digits in $S_q(c)$ is equal to $s_q(z)+n\,\dot{q} +1$, and by arguments similar to the ones used in Lemma~\ref{lem1}, this value is not changing while the suffix of $c+i\,d$ is greater or equal to zero, i.e. during $\dot{q}$ steps; then we get into a situation when $y=x=0$ and $z$ is increased by 1. We may set $z$ to be a zero to hold the homogeneity of a progression on this step.

After the next addition we get into conditions of Lemma~\ref{lem1} with $x=q^n-1$, which provides us with an arithmetical progression of length $q^n+q$.

Now we subtract $d$ from the initial $c$ to make sure that $s_q(c-d)\neq s_q(c)$ and we cannot obtain longer arithmetical progression.
Indeed, $c-d=z\, q^{2n}+(q^n-\dot{q})\, q^n+q$ and the sum of digits in its $q$-ary representation is $\,\,n\,\dot{q}-\dot{q}+1$, while in $c$ it is $\,\,n\,\dot{q}+1$. Hence the length of this progression is $1+\dot{q}+q^n+q=q^n+2\,q$, and the lemma is proved.
\end{proof}

Now let us prove that we can not construct an arithmetical progression with the difference $d=q^n-1$  longer than $q^n+2q$.

Here we represent the initial number $c$ of the progression this way: $c=y\, q^n +x$, $x<q^n$.

The case of initial number $c$ with $x_j+y_j = \dot{q}$, $j=0,1,...,n-1$ is described in Lemma~\ref{lem1}. In other case there is at least one index $j$ such that $x_j+y_j\neq \dot{q}$. We choose $j$ which is the minimal. There are $q\,\dot{q}$ possibilities of values $(y_j, x_j)$: $(0,0), (0,1), ..., (0,\ddot{q}), (1,0),..., (\dot{q},\dot{q})$.

Integers $y$ and $x$ have $q$-ary representations $S_q(y)=y_{s-1}\cdots y_{j+l+1}\dot{q}\cdots\dot{q}y_j\cdots y_0$ and $S_q(x)=x_{n-1}\cdots x_{j+m+1}0\cdots0x_j\cdots x_0$, where $0\leq l\leq s-j$, $0\leq m\leq n-j$, $y_{j+l+1}\neq \dot{q}$, and $x_{j+m+1}\neq 0$.

We add $q^{j+1}\, d$ to $c$. If $l\neq0$, the block $\,\,\dot{q}\cdots\dot{q}\,\,$ in $S_q(y)$ transforms to the block of zeros; if $m\neq0$, the block of zeros in $S_q(x)$ transforms to the block $\,\,\dot{q}\cdots\dot{q}$, $y_{j+l+1}$ increases by one, and $x_{j+m+1}$  decreases by one. To hold the homogeneity we need $l$ and $m$ to be equal modulo $q$.

There are two different cases.
\begin{enumerate}
\item If $x_j<\dot{q}-y_j$, then after $(x_j+1)\, q^j$ additions of the difference we obtain the number with $q$-ary expansion $y_{s-1}\cdots y_{j+l+2}(y_{j+l+1}+1)0\cdots0(y_j+x_j+1)y_{j-1}\cdots y_0$ $x_{n-1}\cdots x_{j+m+2}\\ \dot{x_{j+m+1}} \dot{q}\cdots\dot{q}\ddot{q}\dot{q}x_{j-1}\cdots x_0$ with the sum of digits $s_q(y)+s_q(x)+\dot{q}\neq s_q(y)+s_q(x)=s_q(c)$. Thus the length of an arithmetic progression is not greater than $q^j(q+x_j+1)\leq q^n$ if $j<n-1$.

\item If $x_j>\dot{q}-y_j$, we add $(q-y_j)\, q^j\, d$ and obtain a number of the form $y_{s-1}\cdots y_{j+l+2}(y_{j+l+1}+1)0\cdots010 y_{j-1}\cdots y_0$ $x_{n-1}\cdots x_{j+m+2}\dot{x_{j+m+1}}\dot{q}\cdots\dot{q}(x_j -q+y_j)x_{j-1}\cdots x_0$ with the sum of digits $s_q(y)+1-l\dot{q}+1-y_j+ s_q(x)+m\dot{q}-1-q+y_j=s_q(y)+s_q(x)+1$. The length of an arithmetic progression in this case is not greater than $q^j(2\,q-y_j)\leq q^n$, if $j<n-1$.
\end{enumerate}

\begin{example} Let us consider an example for $q=5$, $m=l=3$, $n=6$, $j=1$,$(y_j,x_j)=(1,1)$, $d=15624$, $S_5(d)=444444$.
\begin{center}
\begin{tabbing}
11111111111111\=11111111111111111\=11111111111111111111\=1111111\kill\\
\>$\mathbf{number}$\> $\mathbf{S_5}$ \> $\mathbf{s_5}$ \\
\>\>\>\\
\>$c=97396881$\>\thinspace$144413200011$\>$1$\\
\>\>+\ \ \ \thinspace\thinspace$44444400$ \>\\
\>$c+25\, d$\> \thinspace $\overline{200013144411}$ \>$1$\\
\>\>+\ \ \ \ \ \thinspace $4444440$\>\\
\>$c+30\, d$ \>\thinspace $\overline{200023144401}$ \> $1$\\
\>\>+\ \ \ \ \ \thinspace $4444440$\>\\
\>$c+35\, d$ \> \thinspace $\overline{200033144341}$ \> $0$\\
\end{tabbing}
\end{center}
\end{example}
The case $j=n-1$ needs a special consideration.

The way of acting is the same: we add $\,\,x\,d\,\,$ to $\,c\,$ and nullify $x$ by that,
then add $d$ necessary number of times. Thus the worst case is then $S_q(x)=x_{n-1}\dot{q}\cdots\dot{q}$ and $S_q(y)=y_{s-1}\cdots y_n y_{n-1}0\cdots 0$. The length of an arithmetic progression is $x+2<q^n+2$ if $n\equiv0$ mod $q$ and $x+1\leq q^n$ otherwise. But since $y_{n-1}+x_{n-1}\neq\dot{q}$, there are two cases to consider, let us introduce schemes for both.\\
\begin{enumerate}
\item $r=y_{n-1}+x_{n-1}<\dot{q}$.
\begin{center}
\begin{tabular}{|c|r|c|}
\hline
 $\mathbf{i}$ & $\mathbf{S_q(c+id)}$\ \ \ \ \ \ \ \ \ \ \ \ \ \ \ \ \ \ \ \    &  $\mathbf{s_q(c+id)}$ \\
\hline
 0 &\ \ $y_{s-1}\cdots y_n\ \ y_{n-1}$\ \ $0\cdots 0$\ $x_{n-1}\ \dot{q}\cdots \dot{q}\ \dot{q}$\ \ &\ \  $n\dot{q}-\dot{q}+r+y_n+...+y_{s-1}$\ \ \\
 \vdots&\ \ \ \ \ \ \ \ \ \vdots \ \ \ \ \ \ \ \ \ \ \ \ \ \ \ \ \ \ \ \vdots\ \ \ \ \ \ \ \ \ \ \ \ \ \ \ \ \vdots\ \ \ \ \ \ \ \  &\vdots\\
 \vdots &\ \ \ \ \ \ \ \ \ \vdots \ \ \ \ \ \ \ \ \ \ \ \ \ \ \ $+x\, d$ \ \ \ \ \ \ \ \ \ \ \ \ \vdots\ \ \ \ \ \ \ \  &\vdots \\
 \vdots &\ \ \ \ \ \ \ \ \ \vdots \ \ \ \ \ \ \ \ \ \ \ \ \ \ \ \ \ \ \ \vdots\ \ \ \ \ \ \ \ \ \ \ \ \ \ \ \ \vdots\ \ \ \ \ \ \ \  &\vdots \\
$x$ &\ \ $y_{s-1}\cdots y_n\ \ \ \ r$\ \ \ \ \ $\dot{q}\cdots\dot{q}$\ \ \ \ $0\ \ \ 0\cdots0\ 0$\ \  &\ \ $n\dot{q}-\dot{q}+r+y_n+...+y_{s-1}$\\
 \ \ $x+1$\ \  &\ \ $y_{s-1}\cdots y_n\ \ \ \ r$\ \ \ \ \ $\dot{q}\cdots\dot{q}$\ \ \ \ $\dot{q}\ \ \ \dot{q}\cdots\dot{q}\ \dot{q}$\ \  &\ \ $2n\dot{q}-\dot{q}+r+y_n+...+y_{s-1}$\ \ \\
 \ \ $x+2$\ \ &\ \ $y_{s-1}\cdots y_n \tiny (r+1)$\ $0\cdots0$\ \ \ \  $\dot{q}\ \ \ \dot{q}\cdots\dot{q}\ \ddot{q}$\ \  &\ \ $n\dot{q}+r+y_n+...+y_{s-1}$\ \ \\
\hline
\end{tabular}
\end{center}

\item $r=y_{n-1}+x_{n-1}>\dot{q}$.
Here we denote by $y_{s-1}'\cdots y_n'$ symbols $y_{s-1}\cdots y_n$ transformed after increasing  $y_n$ by 1 on the step $x$; to hold the homogeneity their sum should be equal to $y_{s-1}+ ... + y_n$ mod $q$.
\begin{center}
\begin{tabular}{|c|r|c|}
\hline
 $\mathbf{i}$ & $\mathbf{S_q(c+id)}$ \ \ \ \ \ \ \ \ \ \ \ \ \ \ \ \ \ \ \ \  &\ \ $\mathbf{s_q(c+id)}$ \\
\hline
 0 &\ \ $y_{s-1}\cdots y_n\ \ \ \ y_{n-1}$\ \ \ \ \ \ $0\cdots 0$\ $x_{n-1}\ \dot{q}\cdots \dot{q}\ \dot{q}$ \ \ &\ \  $n\dot{q}-\dot{q}+r+y_n+...+y_{s-1}$\ \ \\
\vdots &\ \ \ \ \ \ \ \ \ \vdots \ \ \ \ \ \ \ \ \ \ \ \ \ \ \ \ \ \ \ \ \ \ \ \vdots\ \ \ \ \ \ \ \ \ \ \ \ \ \ \ \ \vdots\ \ \ \ \ \ \ \ \ &\vdots \\
 \vdots &\ \ \ \ \ \ \ \ \vdots \ \ \ \ \ \ \ \ \ \ \ \ \ \ \ \ \ \ \ $+x\, d$ \ \ \ \ \ \ \ \ \ \ \ \ \vdots\ \ \ \ \ \ \ \ \ &\vdots \\
 \vdots &\ \ \ \ \ \ \ \ \ \vdots \ \ \ \ \ \ \ \ \ \ \ \ \ \ \ \ \ \ \ \ \ \ \ \vdots\ \ \ \ \ \ \ \ \ \ \ \ \ \ \ \ \vdots\ \ \ \ \ \ \ \ \ &\vdots \\
$x$ &\ \ $y_{s-1}'\cdots y_n'\ \ (r-q)$\ \ \ \ \ $\dot{q}\cdots\dot{q}$\ \ \ \ $0\ \ \ 0\cdots0\ 0$ \ \ &\ \ $n\dot{q}-\dot{q}+r+y_n'+...+y_{s-1}'$\ \ \\
\ $x+1$\ &\ \ $y_{s-1}'\cdots y_n'\ \ (r-q)$\ \ \ \ \ $\dot{q}\cdots\dot{q}$\ \ \ \ $\dot{q}\ \ \ \dot{q}\cdots\dot{q}\ \dot{q}$ \ \ &\ $2n\dot{q}-\dot{q}+r+y_n'+...+y_{s-1}'$\         \\
\ $x+2$\ &\ \ $y_{s-1}'\cdots y_n'\ (r-q+1)$\ $0\cdots0$\ \ \ \  $\dot{q}\ \ \ \dot{q}\cdots\dot{q}\ \ddot{q}$ \ \ &\ \ $n\dot{q}+r+y_n'+...+y_{s-1}'$\ \ \\
\hline
\end{tabular}
\end{center}
\end{enumerate}
Hence all possible cases have been considered and the theorem is proved.
\end{proof}

\subsection{Upper Bound on the Arithmetic Index for $\omega_q$}
Let $q$ be a prime, $n$ be a positive integer, and $u$ be a finite word over the alphabet $\Sigma_q$ of length $m$, where $q^{n-1}\leq m< q^n$.
The goal is to find its occurrence in $\omega_q$ as an arithmetic factor.
To reach the goal we use an arithmetic factor of $\omega_q$ of the form $0^{m-1}\beta_1\cdots\beta_{m}$, where each $\beta_i\in\Sigma_q$ and $\beta_1\neq0$. Lemma~\ref{lem2} provides us with its initial symbol $c$ and difference $d=q^n-1$.

Then we define a basis $\{b_i\}_{i=1}^m$
\begin{tabbing}
\=111\=1111111111111111111111111\kill\\
\>\>$b_1=\ 0\ \ 0\ \ 0\ \ \cdots\ \ \ \ 0\ \ \ \ \beta_1;$\\
\>\>$b_2=\ 0\ \ 0\ \ 0\ \ \cdots\ \ \ \beta_1\ \ \ \beta_2;$\\
\>\>$\ \ \ \vdots\ \ \ \ \ \ \ \ \ \ \ \ \ \ \ \ \vdots\ \ \ \ \ \ \ \ \ \ \ \ \ \ \vdots$\\
\>\>$b_m=\beta_1\beta_2\beta_3\ \cdots\ \beta_{m-1}\ \beta_m$,
\end{tabbing}
where all basis elements are arithmetic factors of $\omega_q$ with the difference $d$, and their initial numbers are $c_1=c$, $c_{i+1}=c+i\, d, i=1,...,m-1$.
The word $u$ can be represented in the following form: $u=\bigoplus\limits_{i=1}^{m} \alpha_i\, b_i$ with $\alpha_i\in\Sigma_q$ for every $i=1,2,...,m$, where $\bigoplus$ means symbol-to-symbol addition modulo $q$.

 Then let us construct the initial number $c_u$ of the arithmetic factor $u$ with $q$-ary representation $S_q(c_u)=\underbrace{S_q(c_1)\cdots S_q(c_1)}_{\alpha_1}\cdots\underbrace{S_q(c_m)\cdots S_q(c_m)}_{\alpha_m}$. According to Lemma~\ref{lem2} the length of each $S_q(c_i)$ is not greater than $2\,n+q$. To simplify the process let us put zeros left to the nonzero symbol with the maximal index in every $S_q(c_i)$ when it is necessary. Hence the length of $S_q(c_u)$ is not greater than $\,(2\,n+q)\,m\dot{q}$. Then we construct the difference $d_u$ with $S_q(d_u)=\underbrace{0\cdots0}_{\text{n+q}}\underbrace{\dot{q}\cdots\dot{q}}_{\text{n}} \cdots\underbrace{0\cdots0}_{\text{n+q}}\underbrace{\dot{q}\cdots\dot{q}}_{\text{n}}$ of the same length, and the word $u$ is guaranteed to appear in $\omega_q$ as an arithmetic factor with difference $d_u$ and initial number $c_u$.

The worst case is when all basis elements in the representation of $u$ are taken with coefficients $\dot{q}$.
In this case the length of $S_q(d_u)=(2\,n+q)\,\dot{q}\, m =(2\lceil\log_q{m}\rceil+q)\dot{q}\, m$,
 which is the upper bound on the function of arithmetic index in $\omega_q$. Thus we proved the following inequality:
\begin{equation*}
I_{\omega_q}(m) \leq (2\lceil\log_q{m}\rceil+q)\dot{q}\, m.
\end{equation*}

\section{Lower Bound on Arithmetic Index}

Consider the set $A_\omega(d)$ of all arithmetic subsequences with the difference $d$ in the word $\omega$ over
the alphabet $\Sigma_q$. Define a function $A_\omega(d,m)=|A_\omega(d)\cap\Sigma_q^m|$ counting the number
of different arithmetic factors with difference $d$ and of length $m$ in $\omega$.
Clearly, $|A_\omega(1)\cap\Sigma_q^m|=p_\omega(m)$ and, more general, $A_\omega(d,m)=|A_\omega(d)\cap\Sigma_q^m|\leq p_\omega(d\, m)$.

Obtaining a lower bound on the function $I_{\omega}(m)$ is equivalent to
obtaining the lower bound on $x$ in the following inequality:
\begin{equation*}
 a_\omega(m)\leq \sum\limits_{d=1}^{x} A_\omega(d,m)\leq \sum\limits_{d=1}^{x} p_\omega(d\, m).
\end{equation*}

The Thue-Morse word and its generalization are fixed points of uniform morphisms, their factor complexity is known to grow linearly \cite{Ehr}, i.e. $p_{\omega_q}(m)\leq C\, m$ for a positive integer $C$. As mentioned in sect.~2, $a_{\omega_q}(m)=q^m$, thus the inequality for the generalized Thue-Morse word takes the following form:

\begin{equation*}
 q^m \leq \sum\limits_{d=1}^{x} C\,d\,m=\frac{C\,m\,x(x+1)}{2}.
\end{equation*}
Solving this inequality we obtain a lower bound on $x$, which is $x\geq \sqrt{\frac{1}{4}+\frac{2q^m}{Cm}} -\frac{1}{2}$.

The 
We are interested in the integer part of $\log_q x$. The value $\lceil\log_q(x+\frac{1}{2})\rceil$ is either $\lceil\log_q{x}\rceil$ or $\lceil\log_q{x}\rceil+1$, and $\lceil\log_q x\rceil +1 \geq \lceil\log_q(x+\frac{1}{2})\rceil \geq \Big\lceil\log_q{\sqrt{\frac{1}{4}+\frac{2q^m}{Cm}}}\,\Big\rceil$.\\


The lower bound on the function of arithmetic index in $\omega_q$ is
\begin{equation*}
 I_{\omega_q}(m)\geq \lceil\log_q x\rceil \geq \bigg\lceil\log_q\sqrt{\frac{1}{4}+\frac{2q^m}{Cm}}\bigg\rceil\geq \bigg\lceil\frac{1}{2}\log_q\frac{2q^m}{Cm}\bigg\rceil,
\end{equation*}
and the leading term of this expression has linear order.

The factor complexity of $\omega_2$ was computed in 1989 by Brlek \cite{b} and de Luca \& Varricino \cite{LV}. Using their result we can set the constant $C$ to be 4 in this case, and the lower bound can be written in the following way:
\begin{equation*}
I_{\omega_2}(m)\geq \frac{m-\log_2 m-1}{2}.
\end{equation*}

\section{Conclusion}
To sum up, the upper bound on the function of arithmetic index grows as $O(m\log m)$, the lower bound grows as $O(m)$. 
Given arbitrary infinite word with factor and arithmetic complexities are known, one can easily compute a lower bound on the arithmetic index in this word.
The upper bound is more difficult to compute and requires deeper knowledge of the word structure.

According to computer experiments, which were carried out for the Thue-Morse sequence,
the real growth of the function $I_{\omega_2}(m)$ is closer to the lower bound. Moreover, both theoretical reasoning and computer data show, that alternating arithmetic subsequences have the maximal arithmetic index. But they are not the only subsequences contained in the set of words with extremal arithmetic index; this set is going to be described for $\omega_q$ and then for other automatic words.

\end{document}